\def\blfootnote{\gdef\@thefnmark{}\@footnotetext}
\def\RCAo{\mathsf{RCA_0}}
\def\WKLo{\mathsf{WKL_0}}
\def\ACAo{\mathsf{ACA_0}}
\def\WKL{\mathsf{WKL_0}}
\def\E{\exists}
\def\A{\forall}
\def\N{\mathbb{N}}
\def\lh{\mathrm{lh}}
\def\rest{{\upharpoonright}}
\def\P2{\Pi^1_2}
\def\PHt{\mathrm{PH}^2_2}
\def\RT{\mathrm{RT}}
\def\BII{\mathrm{B}\Sigma^0_2}
\def\II{\mathrm{I}\Sigma^0_1}
\def\III{\mathrm{I}\Sigma^0_2}
\newcounter{menum}
{\begin{enumerate}%
\setcounter{enumi}{#1}}%
{\setcounter{menum}{\value{enumi}}\end{enumerate}}
\newtheorem{thm}{Theorem}[section]
\newtheorem*{theorem*}{Theorem}
\newtheorem*{claim*}{Claim}
\newtheorem{prop}[thm]{Proposition}
\newtheorem{lem}[thm]{Lemma}
\newtheorem{cor}[thm]{Corollary}
\theoremstyle{definition}
\newtheorem{defi}{Definition}[section]
\newtheorem{question}[defi]{Question}
\definecolor{lightred}{rgb}{1,.60,.60}
\begin{document}
\newpage
%\thispagestyle{empty}
%\pagenumbering{roman}
%\setcounter{page}{0}
%\keywords{}

\title{Very weak fragments of weak K\H{o}nig's lemma}

\author[1]{Stephen G.~Simpson}
\author[2]{Keita Yokoyama}

\affil[1]{\small \sf{sgslogic@gmail.com}}
\affil[2]{\small \sf{y-keita@jaist.ac.jp}}

\date{}
%\date{
% First draft: July 14, 2016\\
% Second draft: March 28, 2017\\
%This draft: \today}

\blfootnote{%
The work of the second author is partially supported by
 JSPS KAKENHI grant number 16K17640,
 JSPS fellowship for research abroad,
 JSPS-NUS Bilateral Joint Research Projects J150000618 (PI's: K.~Tanaka, C.~T.~Chong),
 and JSPS Core-to-Core Program (A.~Advanced Research Networks).

The major part of this work was done when the second author visited Vanderbilt university in July 2016.
 }

\maketitle
\def\Bexp{\mathrm{B}\Sigma_{1}+\mathrm{exp}}
\newcommand\RF{\mathrm{RF}}
\newcommand\tpl{\mathrm{tpl}}
\newcommand\col{\mathrm{col}}
\newcommand\fin{\mathrm{fin}}
\newcommand\Log{\mathrm{Log}}
\newcommand\Ct{\mathrm{Const}}
\newcommand\It{\mathrm{It}}
\renewcommand\PHt{\widetilde{\mathrm{PH}}{}}
\newcommand\BME{\mathrm{BME}_{*}}
\newcommand\HT{\mathrm{HT}}
\newcommand\Fin{\mathrm{Fin}}
\newcommand\FinHT{\mathrm{FinHT}}
\newcommand\wFinHT{\mathrm{wFinHT}}
\newcommand\FS{\mathrm{FS}}
\newcommand\LL{\mathsf{L}}
\newcommand\GPg{\mathrm{GP}}
\newcommand\GP{\mathrm{GP}^{2}_{2}}
\newcommand\FGPg{\mathrm{FGP}}
\newcommand\FGP{\mathrm{FGP}^{2}_{2}}
\newcommand\SGP{\mathrm{SGP}^{2}_{2}}
\newcommand\Con{\mathrm{Con}}
\newcommand\WF{\mathrm{WF}}
\newcommand\bb{\mathbf{b}}
\renewcommand\WKL{\mathrm{WKL}}
\newcommand\KL{\mathrm{KL}}
\newcommand\sWKL{\Sigma^{0}_{1}\text{-}\mathrm{WKL}}
\newcommand\ext{\mathrm{ext}}
\newcommand\VS{\mathrm{VSMALL}}

%
%\begin{abstract}
%\end{abstract}

\section{Introduction}\label{sec:preliminary}
It is well-known that any finite $\Pi^{0}_{1}$-class of $2^{\N}$ has a computable real.
Then, how can we understand this in the context of reverse mathematics?
We need to formalize the statement carefully.
Within $\RCAo$, the assertion ``every infinite binary tree which has at most one path has a path'' is already equivalent to $\WKL$ since the negation of $\WKL$ implies the existence of an infinite binary tree with no path.
In this note, we will consider the following very weak versions of K\"onig's lemma.
\begin{defi}\label{defi-WWWKL}
\begin{enumerate}
 \item $\WKL(pf\text{-}bd)$: an infinite binary tree $T\subseteq 2^{<\N}$ has a path if there exists $c\in\N$ such that for any prefix-free set $P\subseteq T$, $|P|\le c$.
 \item $\WKL(w\text{-}bd)$: an infinite binary tree $T\subseteq 2^{<\N}$ has a path if there exists $c\in\N$ such that for any $n\in\N$, $|T^{=n}|\le c$, where $T^{=n}=\{\sigma\in T\mid \lh(\sigma)=n\}$.
 \item $\WKL(ext\text{-}bd)$: an infinite binary tree $T\subseteq 2^{<\N}$ has a path if there exists $c\in\N$ such that for any $n\in\N$, $|T_{\ext}^{=n}|\le c$, where $T_{\ext}^{=n}=\{\sigma\in T\mid \lh(\sigma)=n\wedge \sigma\mbox{ is extendible}\}$.
 \item $\sWKL(pf\text{-}bd)$: an $\Sigma^{0}_{1}$ infinite binary tree $T\subseteq 2^{<\N}$ has a path if there exists $c\in\N$ such that for any prefix-free set $P\subseteq T$, $|P|\le c$.
 Here, $\Sigma^{0}_{1}$ tree is described by an enumeration of binary strings $\{\sigma_{i}\}_{i\in\N}$ such that $T=\{\sigma_{i}\mid i\in\N\}$ forms a tree.
 \item $\sWKL(w\text{-}bd)$: an $\Sigma^{0}_{1}$ infinite binary tree $T\subseteq 2^{<\N}$ has a path if there exists $c\in\N$ such that for any $n\in\N$, $|T^{=n}|\le c$.
 This statement is known as Chaitin's lemma.
 \item $\sWKL(ext\text{-}bd)$: an $\Sigma^{0}_{1}$ infinite binary tree $T\subseteq 2^{<\N}$ has a path if there exists $c\in\N$ such that for any $n\in\N$, $|T_{\ext}^{=n}|\le c$.
 \item $\KL(pf\text{-}bd)$: an infinite binary tree $T\subseteq \N^{<\N}$ has a path if there exists $c\in\N$ such that for any prefix-free set $P\subseteq T$, $|P|\le c$.
 \item $\KL(w\text{-}bd)$: an infinite binary tree $T\subseteq \N^{<\N}$ has a path if there exists $c\in\N$ such that for any $n\in\N$, $|T^{=n}|\le c$.
 \item $\KL(ext\text{-}bd)$: an finitely-branching infinite binary tree $T\subseteq \N^{<\N}$ has a path if there exists $c\in\N$ such that for any $n\in\N$, $|T_{\ext}^{=n}|\le c$.
\end{enumerate} 
\end{defi}
Trivially, 3 $\to$ 2 $\to$ 1, 6 $\to$ 5 $\to$ 4 and 9 $\to$ 8 $\to$ 7 hold.
One may observe that any $\Sigma^{0}_{1}$ tree is easily interpreted as a tree in $\N^{<\N}$.
Indeed, for a given a $\Sigma^{0}_{1}$ tree $T=\{\sigma_{i}\}_{i\in\N}$, set $\hat{T}\subseteq \N^{<\N}$ as $\tau\in \hat{T}\leftrightarrow \A i\le j<\lh(\tau)(\lh(\sigma_{\tau(i)})=i\wedge\sigma_{\tau(i)}\subseteq\sigma_{\tau(j)})$,
then, $T$ and $\hat{T}$ is isomorphic and a path of $\hat{T}$ computes a path of $T$.
Thus, we have 7 $\to$ 4 $\to$ 1, 8 $\to$ 5 $\to$ 2 and 9 $\to$ 6 $\to$ 3.

It is well-known that any $\Sigma^{0}_{1}$-definable set can be described as a unique path of a $\Sigma^{0}_{1}$ tree.
\begin{prop}[$\RCAo$, folklore]
For any $\Sigma^{0}_{1}$-definable set $A\subseteq \N$, there exists a $\Sigma^{0}_{1}$ tree $T$ such that $|T_{\ext}^{=n}|=1$ for any $n\in\N$ and $A$ is a path of $T$.
\end{prop}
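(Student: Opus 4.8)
The plan is to build the tree $T$ directly from a $\Sigma^0_1$ definition of $A$. Suppose $A = \{k \mid \exists m\, \theta(k,m)\}$ with $\theta$ a $\Sigma^0_0$ formula. Working in $\RCAo$, I would enumerate strings into $T$ as follows: at stage $s$, for each $k < s$ decide whether we have already seen a witness $m < s$ with $\theta(k,m)$; this yields a finite approximation $A_s \subseteq \{0,\dots,s-1\}$ to $A \cap \{0,\dots,s-1\}$ which is nondecreasing in $s$ in the sense that once $k$ enters $A_s$ it stays. For each $n$, put into $T$ the string $\sigma \in 2^{n}$ defined by $\sigma(k) = 1$ iff $k \in A_{s}$, for every stage $s \ge n$; that is, enumerate, for each length $n$ and each sufficiently large stage $s$, the characteristic string of $A_s \rest n$. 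Also enumerate all initial segments of these strings so that the enumerated set is closed under prefixes and hence genuinely a tree.

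The key points to verify are then: (1) $T$ really is a $\Sigma^0_1$ tree, i.e. the enumeration is $\Sigma^0_1$ and its range is closed downward — this is immediate from the construction since we explicitly throw in all prefixes, and the enumeration is primitive recursive in the $\Sigma^0_1$ approximation to $A$; (2) $A$, viewed as its characteristic function, is a path through $T$ — this holds because for each $n$, the string $(A\rest n)$ equals $A_s \rest n$ for all sufficiently large $s$ (once all witnesses for elements of $A$ below $n$ have appeared), and such a string is enumerated; note this uses that $A \cap \{0,\dots,n-1\}$ is a finite set, which is available in $\RCAo$ by $\Sigma^0_1$ bounding/the fact that a bounded $\Sigma^0_1$ set exists as a set; (3) for each $n$, $|T_{\ext}^{=n}| = 1$. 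For (3), observe that a string $\sigma \in 2^n$ in $T$ is extendible to a path exactly when it is an initial segment of the true characteristic string of $A$: any $\sigma$ enumerated at length $n$ has the form $A_s \rest n$ for some $s$, and if $A_s \rest n \ne A \rest n$ then some later approximation changes a bit below $n$, so no string of the form $A_t \rest m$ with $m \ge n$, $t \ge m$ extends $\sigma$; hence $\sigma$ is not extendible. Conversely $A \rest n$ is extendible since $A$ itself is a path. So the unique extendible node of length $n$ is $A \rest n$.

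I would organize the write-up as: first fix the $\Sigma^0_1$ formula and define the approximations $A_s$; then define the enumeration $\{\sigma_i\}$ and check it forms a tree; then prove $A$ is a path; then prove the $\Sigma^0_1$-tree of nodes at level $n$ has exactly one extendible element, arguing that extendibility forces agreement with $A$. The main obstacle — really the only subtle point — is the interaction with the weak base theory $\RCAo$: one must be careful that "$A \rest n$ exists as a finite binary string" and "the approximation $A_s \rest n$ stabilizes" are justified without assuming more induction than $\Sigma^0_1$ induction gives. This is handled by noting that for the stabilization one only needs to collect finitely many witnesses below a fixed bound $n$, and that the existence of the initial segment of a $\Sigma^0_1$ set as a bona fide set of length $n$ is exactly $\Delta^0_1$-comprehension once one has a bound, which $\Sigma^0_1$ induction on $n$ supplies. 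Everything else is a routine unwinding of definitions.
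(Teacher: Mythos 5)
Your construction is correct and is essentially the paper's own: enumerate, for each length $n$, the characteristic strings of the stage-$s$ approximations $A_s$ restricted to $[0,n)$; your supporting remarks about $\RCAo$ (bounded $\Sigma^0_1$-comprehension to get $A\rest n$, $\Sigma^0_1$-bounding for stabilization of the approximation below a fixed bound) are the right ones. The one substantive point of comparison is your restriction to stages $s\ge n$. This is not a cosmetic choice: it is exactly what makes your step (3) work, since a node $A_s\rest n$ disagreeing with $A\rest n$ at some $k\in A\setminus A_s$ then admits extensions in $T$ only of length at most the first witness for $k$, hence is non-extendible. Without that clause each approximation $\chi_{A_m}$ would itself be an infinite branch of $T$, giving more than one extendible node per level; the formula displayed in the paper's proof does not make the restriction $m\ge\lh(\sigma)$ explicit and should be read as including it. So your version is, if anything, the more careful write-up of the same folklore argument.
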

\begin{proof}
Write $n\in A\leftrightarrow \E m\theta(m,n)$ for some $\Sigma^{0}_{0}$ formula $\theta$.
Then, define a $\Sigma^{0}_{1}$ tree $T$ as $\sigma\in T\leftrightarrow \E m\in\N\A i<\lh(\sigma)(\sigma(i)=1\leftrightarrow \E m'<m\,\theta(m',i))$.
It is easy to check that this $T$ is the desired.
\end{proof}

\begin{cor}
The following are equivalent over $\RCAo$.
\begin{enumerate}
 \item $\ACAo$.
 \item $\sWKL(ext\text{-}bd)$.
 \item $\KL(ext\text{-}bd)$.
\end{enumerate}
\end{cor}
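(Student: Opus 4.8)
The plan is to prove the cycle $1 \to 2 \to 3 \to 1$. The implications $2 \to 3$ is immediate from the general reduction $8 \to 5$ mentioned before the corollary (any $\Sigma^0_1$ tree embeds isomorphically into $\N^{<\N}$, and the embedding preserves the extendible-level bound since it is an isomorphism of trees), so really only $1 \to 2$ and $3 \to 1$ require work.

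For $1 \to 2$, I would argue in $\ACAo$. Given a $\Sigma^0_1$ infinite binary tree $T = \{\sigma_i\}_{i\in\N}$ with $|T^{=n}_{\ext}| \le c$ for all $n$, first use arithmetical comprehension to form $T$ as an actual set. Then, again using $\ACAo$, form the set of extendible nodes $T_{\ext} = \{\sigma \in T \mid \A n\, \E \tau \in T\, (\lh(\tau) = n \wedge \sigma \subseteq \tau)\}$; this is $\Pi^0_1$ relative to $T$, hence available. The tree $T_{\ext}$ is an infinite binary tree (every node has an extension at each level by König's lemma for finitely branching trees, which is available in $\ACAo$), and it has a path by $\WKLo$, which follows from $\ACAo$. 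Actually, one should be slightly careful: a cleaner route is to note that once $T_{\ext}$ is a genuine set, it is a pruned infinite binary tree, so one can directly and computably read off its leftmost path without even invoking $\WKLo$ — just take $f(n) = $ the lexicographically least extendible node of length $n$. So $\ACAo$ suffices.

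For $3 \to 1$, the task is to recover $\ACAo$ from the statement that every infinite finitely-branching tree $T \subseteq \N^{<\N}$ with $|T^{=n}_{\ext}| \le c$ has a path. I would reduce the range problem: given an injection $g : \N \to \N$, we must show $\rng(g)$ exists. By the folklore proposition, $\rng(g)$ (which is $\Sigma^0_1$) is the unique path of a $\Sigma^0_1$ tree $S$ with $|S^{=n}_{\ext}| = 1$ for all $n$. Via the reduction $5 \to 2$ already established, $S$ converts into a tree $\hat S \subseteq \N^{<\N}$; since $\hat S$ is isomorphic to $S$ we have $|\hat S^{=n}_{\ext}| = 1$, and $\hat S$ is finitely branching, so $\KL(ext\text{-}bd)$ supplies a path of $\hat S$, which computes a path of $S$, namely $\rng(g)$. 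Since recovering all ranges of injections gives $\ACAo$ over $\RCAo$, we are done.

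The main obstacle, and the point needing the most care, is the passage from a $\Sigma^0_1$ tree to a bona fide set and the claim that $T_{\ext}$ (or $\hat S_{\ext}$) is infinite and finitely branching in a way verifiable in the base theory — in particular, confirming that the isomorphism $T \cong \hat T$ carries the extendible-level bound across unchanged, and that $\hat T$ really is finitely branching (bounded by the enumeration data at each level). Once these bookkeeping facts are in hand, the equivalences fall out of $\WKLo \subseteq \ACAo$ and the standard characterization of $\ACAo$ via ranges of injections.
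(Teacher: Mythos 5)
Your cycle $1 \to 2 \to 3 \to 1$ does not close, because the step $2 \to 3$ is justified by an implication that runs in the opposite direction. The embedding of a $\Sigma^{0}_{1}$ tree $T$ into $\hat T\subseteq\N^{<\N}$ converts an \emph{instance} of $\sWKL(ext\text{-}bd)$ into an instance of $\KL(ext\text{-}bd)$, so it shows that $\KL(ext\text{-}bd)$ implies $\sWKL(ext\text{-}bd)$ (this is the implication $9\to 6$ of the introduction, the analogue of $8\to 5$), i.e.\ it gives $3\to 2$, not $2\to 3$. With only $1\to 2$ and $3\to 1$ actually established, you have $3\to 1\to 2$ but no implication out of item 2, so the three statements are not shown equivalent. (Note that your $3\to 1$ argument in fact factors as $3\to 2\to 1$: you pass from $S$ to $\hat S$, apply $\KL(ext\text{-}bd)$, and then use only the induced path of $S$.)

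The repair is short and is what the paper intends: prove the cycle $2\to 1\to 3\to 2$. For $2\to 1$, given an injection $g$, apply the folklore proposition to the $\Sigma^{0}_{1}$ set $\rng(g)$ to obtain a $\Sigma^{0}_{1}$ binary tree $S$ with $|S^{=n}_{\ext}|=1$ whose path is $\rng(g)$, and apply $\sWKL(ext\text{-}bd)$ \emph{directly} to $S$ --- no detour through $\hat S$ is needed --- to get $\rng(g)$ as a set, hence $\ACAo$. For $1\to 3$, $\ACAo$ proves K\"onig's lemma for finitely branching trees, so $\KL(ext\text{-}bd)$ holds outright (the bound $c$ plays no role); your ``form $T_{\ext}$ and read off the leftmost path'' argument adapts verbatim to finitely branching $T\subseteq\N^{<\N}$. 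Finally $3\to 2$ is the reduction from the introduction. One bookkeeping point if you do route an argument through $\hat S$: for $\hat S$ to be finitely branching you must first make the enumeration $\{\sigma_i\}$ injective, since repeated occurrences of the same string would give a node of $\hat S$ infinitely many children; this is harmless but should be said.
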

Thus, they are not computably weak any more.
On the other hand, all other statements in Definition~\ref{defi-WWWKL} is computably true, in other words, it is true in any $\omega$-model of $\RCAo$.
However, we will see that most of them are not provable within $\RCAo$, thus they require some non-trivial induction.

Throughout this note, we will use the following well-known fact.
\begin{lem}[$\RCAo$]\label{lem:neg-ISig2}
If $\III$ fails, then there exists a set $X$ and a $\Pi^{0,X}_{1}$-set $A$ such that $A$ is unbounded and $|A|\le c$ for some $c\in\N$.
(Here, $A$ is said to be unbounded if $\A n\in\N \E m\ge n\, m\in A$, and
$|A|\le c$ means that for any finite set $F\subseteq A$ (coded by a natural number), $|F|\le c$.)
\end{lem}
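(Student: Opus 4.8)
The plan is to turn the failure of $\III$ into a $\Sigma^0_2$ instance of failed induction and then build $A$ by a finite‑approximation (movable marker) construction. The point to watch throughout is that $A$ must be \emph{honestly} $\Pi^{0,X}_1$ for a set $X$ that $\RCAo$ actually produces: one cannot pass to a Turing jump, which $\RCAo$ need not provide.

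First I would fix, from $\neg\III$, a $\Sigma^0_2$ formula $\varphi(x)\equiv\E y\,\A z\,\theta(x,y,z)$ — with $\theta$ a $\Delta^0_0$ formula whose only set parameter I call $X$, after absorbing all parameters into one set — together with $b\in\N$, so that $\varphi(0)$, $\A x\,(\varphi(x)\to\varphi(x+1))$, and $\neg\varphi(b)$. Let $n(x)$ denote the least $y$ with $\A z\,\theta(x,y,z)$; this exists whenever $\varphi(x)$. The first key claim is that $\{\,n(x):x\le b\wedge\varphi(x)\,\}$ is unbounded. Otherwise it is bounded by some $y^{*}$, and then $\psi(x):\equiv\E y\le y^{*}\,\A z\,\theta(x,y,z)$ — which is $\Pi^{0,X}_1$ by $\mathrm{B}\Sigma^0_1$ — agrees with $\varphi$ on $[0,b]$; but $\psi$ then witnesses a failure of bounded $\Pi^0_1$-induction below $b$ ($\psi(0)$ holds, $\psi$ is preserved by successor below $b$, and $\neg\psi(b)$), contradicting the fact that $\RCAo$ proves $\mathrm{I}\Pi^0_1$.

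Next I would work with the $\Delta^{0,X}_0$ stagewise approximations $w_s(x)$ for $x\le b$: the least $y\le s$ with $\A z\le s\,\theta(x,y,z)$, or $s+1$ if there is no such $y$. If $\varphi(x)$ then there is a least stage $s_0(x)$ with $w_{s_0(x)}(x)=n(x)$, after which $w_s(x)=n(x)$ forever (once each $y<n(x)$ has been refuted it stays refuted); moreover $s_0(x)\ge n(x)-1$, so by the previous paragraph $\{\,s_0(x):x\le b\wedge\varphi(x)\,\}$ is also unbounded. Now put
\[
A:=\{\,m:\A s\ge m\,\E x\le b\,\big(w_m(x)=w_{m+1}(x)=\cdots=w_s(x)\,\wedge\,(m=0\vee w_{m-1}(x)\ne w_m(x))\big)\,\}.
\]
Then $A$ is $\Pi^{0,X}_1$ (one unbounded universal quantifier over a $\Delta^{0,X}_0$ matrix). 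If $\varphi(x)$ then $m:=s_0(x)$ lies in $A$, witnessed by $x$ itself at every $s\ge m$; hence $A$ is unbounded. And $|A|\le b+1$: given a coded finite $F=\{\,y_0<\cdots<y_{k-1}\,\}\subseteq A$, instantiate the defining clause at $s:=y_{k-1}$ to get, for each $i<k$, a witness $x_i\le b$; these are pairwise distinct, because if $x_i=x_j$ with $y_i<y_j$ then $s\mapsto w_s(x_i)$ is constant on $[y_i,y_{k-1}]$, which contains $y_j-1$ and $y_j$, contradicting the clause $w_{y_j-1}(x_j)\ne w_{y_j}(x_j)$. So $k\le b+1$ by finite pigeonhole, and the lemma holds with this $X$ and $c:=b+1$.

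I expect the only real obstacle to be the one already isolated: making $A$ genuinely $\Pi^{0,X}_1$. The naive candidate, the range of $n\rest[0,b]$, is unbounded with at most $b+1$ elements but is only $\Delta^0_2$ over $X$, and $\RCAo$ hands us no oracle deciding it; the trick that rescues the argument is to detect, in a purely $\Pi^0_1$ way, the stage at which $w_s(x)$ has reached its final value, combined with the observation that a given marker can be ``the last to move'' at only one stage — this is what lets bounded cardinality and $\Pi^0_1$-definability coexist. The remaining verifications (basic behaviour of $w_s$, that $\RCAo$ proves $\mathrm{I}\Pi^0_1$ and bounded $\Pi^0_1$-induction, and the finite pigeonhole) are routine.
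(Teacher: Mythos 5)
The paper states this lemma as folklore and offers no proof, so there is no in‑paper argument to compare against; your proof is correct and is essentially the standard one (approximate the $\Sigma^0_2$ cut by stagewise witnesses and take the ``stage of last change'' set). All the facts you invoke are indeed available in $\RCAo$ — $\mathrm{L}\Pi^0_1$ for the existence of $n(x)$, $\mathrm{B}\Sigma^0_1$ both to make $\psi$ genuinely $\Pi^{0,X}_1$ and to get eventual stabilization of the monotone approximation $s\mapsto w_s(x)$, and finite pigeonhole for coded sequences — and the ``newly stabilized at $m$'' clause correctly reconciles $\Pi^{0,X}_1$‑definability with the bound $b+1$ on coded finite subsets.
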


\section{Computably true fragments of $\KL$ and induction}

\begin{thm}
The following are equivalent over $\RCAo$.
\begin{enumerate}
 \item $\BII$.
 \item $\sWKL(pf\text{-}bd)$.
 \item $\KL(pf\text{-}bd)$.
\end{enumerate} 
\end{thm}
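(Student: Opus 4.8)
The plan is to close the cycle $1\Rightarrow 3\Rightarrow 2\Rightarrow 1$. The implication $3\Rightarrow 2$ is already established above (it is the case $7\to 4$ from Definition~\ref{defi-WWWKL}, via the isomorphism $T\mapsto\hat T$), so I only need $1\Rightarrow 3$, i.e.\ $\BII\to\KL(pf\text{-}bd)$, and $2\Rightarrow 1$, i.e.\ $\sWKL(pf\text{-}bd)\to\BII$.

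For $1\Rightarrow 3$, I would work in $\RCAo+\BII$ and fix an infinite tree $T\subseteq\N^{<\N}$ together with $c$ bounding the size of every prefix-free $F\subseteq T$. The first half uses only $\RCAo$. Let $c_0\le c$ be the largest cardinality of a prefix-free subset of $T$; this exists because the sequence ``size of the largest prefix-free set found among the first $s$ codes'' is nondecreasing and bounded by $c$, and $\RCAo$ proves that such sequences are eventually constant. Fix a prefix-free $A_0=\{\rho_1,\dots,\rho_{c_0}\}\subseteq T$ of size $c_0$; it is \emph{maximal}, since any $\tau\in T$ incomparable to all $\rho_i$ would yield a prefix-free set of size $c_0+1$, contradicting either maximality of $c_0$ or the bound $c$. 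Hence every $\tau\in T$ is comparable to some $\rho_i$. For $i\le c_0$ put $C_i=\{\tau\in T:\tau\supseteq\rho_i\}$, a $\Delta^0_1(T)$ cone; the key structural fact, again in $\RCAo$, is that each $C_i$ is linearly ordered by $\subseteq$, because incomparable $\beta_1,\beta_2\in C_i$ would make $\{\beta_1,\beta_2\}\cup(A_0\setminus\{\rho_i\})$ a prefix-free set of size $c_0+1$. The one use of $\BII$ is now to show that some $C_i$ is unbounded in length: if every $C_i$ were bounded, then ``$C_i$ is bounded'' is a $\Sigma^0_2$ statement with $\Pi^0_1$ matrix, so by $\mathrm{B}\Pi^0_1$ (equivalent to $\BII$) there is a single bound $N$, which we may take larger than every $\lh(\rho_i)$, on the lengths of all nodes lying in some $C_i$; but then every $\tau\in T$, being comparable to some $\rho_i$, has $\lh(\tau)<N$, contradicting infinitude of $T$. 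Fixing $i_0$ with $C_{i_0}$ unbounded, $C_{i_0}$ is a chain with exactly one node at each length $m\ge\lh(\rho_{i_0})$, and reading off the $j$-th entries of these nodes (and the entries of $\rho_{i_0}$ below) defines a total function whose graph is $\Sigma^0_1$, hence $\Delta^0_1$, so this path of $T$ exists in $\RCAo$. (The construction works for any such $i_0\le c_0$, so the bounded ``$\exists i_0$'' obtained above suffices.)

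For $2\Rightarrow 1$ I argue the contrapositive in $\RCAo$: from $\neg\BII$, i.e.\ $\neg\mathrm{B}\Pi^0_1$, fix $a$ and a $\Pi^0_1$ formula $\pi(x,y)$ with $\forall x<a\,\exists y\,\pi(x,y)$ such that $g(x):=\min\{y:\pi(x,y)\}$ is unbounded on $[0,a)$, noting that ``$g(x)\ge m$'' is $\Sigma^0_1$ provably in $\RCAo$ (by $\mathrm{B}\Sigma^0_1$). Fix $\ell$ with $2^\ell>a$ and, without loss of generality, $g(x)\ge\ell$ for all $x<a$, and let $T_0\subseteq 2^{<\N}$ be the $\Sigma^0_1$ tree that enumerates all prefixes of the strings $B_x:=\mathrm{bin}_\ell(x){}^\frown 0^{\,g(x)-\ell}$ (for $x<a$) -- concretely, emit the finitely many prefixes of the $\mathrm{bin}_\ell(x)$'s at the start, and emit $\mathrm{bin}_\ell(x){}^\frown 0^{\,k}$ as soon as ``$g(x)\ge\ell+k$'' has been confirmed. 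Then $T_0$ is prefix-closed; it is infinite because $\{g(x):x<a\}$ is unbounded; a routine count shows every prefix-free $F\subseteq T_0$ has $|F|\le a$ (each $\tau\in F$ lies below some $B_x$, and two lying below the same $B_x$ are comparable); and $T_0$ has no path, since a path $f$ would satisfy $f\rest\ell=\mathrm{bin}_\ell(x_0)$ for a unique $x_0<a$, forcing $f\rest n\subseteq B_{x_0}$ for all $n\ge\ell$ and hence $\lh(B_{x_0})$ infinite, which is absurd. Thus $T_0$ refutes $\sWKL(pf\text{-}bd)$.

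The main obstacle is pinpointing exactly where induction must be spent in the otherwise elementary ``decompose a $pf$-bounded tree into $\le c_0$ chains'' argument: the decomposition is an $\RCAo$ fact, but passing from ``every cone $C_i$ is bounded in length'' to ``$T$ is bounded in length'' genuinely requires $\mathrm{B}\Pi^0_1$ once $c_0$ may be nonstandard, and it is precisely this failure -- a union of $a$ finite chains, with $a$ nonstandard, being infinite yet branchless -- that the tree $T_0$ in $2\Rightarrow 1$ is built to exploit. The accompanying care is bookkeeping the $\Sigma^0_1$/$\Pi^0_1$ complexities so that $c_0$, the cones, the bound produced by $\mathrm{B}\Pi^0_1$, and the final path are all legitimately definable in $\RCAo$.
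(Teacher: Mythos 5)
Your proposal is correct and follows essentially the same route as the paper's proof: for $1\Rightarrow 3$ you extract a maximum-size (hence maximal) prefix-free set, observe that each of its cones is a chain, and use $\BII$ (in the equivalent guise of $\mathrm{B}\Pi^0_1$-collection) to find an unbounded cone yielding a $\Delta^0_1$ path; for $\neg 1\Rightarrow\neg 2$ you build the same kind of counterexample --- a $\Sigma^0_1$ tree that is a union of boundedly many finite chains of unbounded length --- differing only cosmetically from the paper's tree $\{1^{x}0^{y}\}$ (you label chains by $\mathrm{bin}_\ell(x)$ and start from $\neg\mathrm{B}\Pi^0_1$ rather than $\neg\RT^1$). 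The bookkeeping of definability and the uses of induction/collection all check out.
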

\begin{proof}
We first show 1 $\to$ 3.
Let $b\in\N$, and let $T\subseteq \N^{<\N}$ be an infinite tree such that for any prefix-free $P\subseteq T$, $|P|\le b$.
By $\Sigma^{0}_{1}$-induction, $b_{0}=\max \{b'\le b\mid \E P\subseteq T(P$ is prefix-free and $|P|=b')\}$ exists.
Take prefix-free $P_{0}\subseteq T$ so that $|P_{0}|=b_{0}$.
Then, every element in $T$ is compatible with some member in $P_{0}$ by the maximality.
Put $\hat{T}=\{\sigma\in T\mid \E \tau\in P_{0}\, \sigma\supseteq \tau\}$.
Then, $\hat{T}$ is infinite.
For any $\sigma\in \hat{T}$, there exists at most one immediate extension.
(Presume that $\sigma_{1}$ and $\sigma_{2}$ are immediate extensions of $\sigma$ and $\sigma\supset \tau\in P_{0}$, then $P_{0}\setminus\{\tau\}\cup\{\sigma_{1},\sigma_{2}\}$ is a prefix-free set, which contradicts the maximality of $P_{0}$.)
By $\BII$, there exists $\tau_{0}\in P_{0}$ such that there exist infinitely many extensions of $\tau_{0}$ in $\hat{T}$.
Thus, any extension of $\tau_{0}$ has exactly one immediate extension.
Hence one can compute a path extending $\tau_{0}$.

3 $\to$ 2 is explained in Section~\ref{sec:preliminary}.

Finally we show $\neg$1 $\to$ $\neg$2.
Since $\BII$ is equivalent to $\RT^{1}$, we assume that there exists a function $h:\N\to b$ for some $b\in\N$ such that $h^{-1}(x)$ is finite for all $x<b$.
Put $T=\{1^{x}0^{y}\in 2^{<\N}\mid \E x<b\, \E y'\ge y\, h(y')=x\}$.
Then, $T$ is a $\Sigma^{0}_{1}$ tree, and it is infinite since $1^{h(n)}0^{n}\in T$ for any $n\in\N$.
Moreover, any prefix-free subset of $T$ is bounded by $b$.
Since $h^{-1}(x)$ is finite for all $x<b$, there is no infinite path of $T$, thus we have $\neg$2.
\end{proof}

\begin{lem}\label{lem:ISig2>ext-bd}
$\RCAo+\III$ proves $\WKL(ext\text{-}bd)$.
\end{lem}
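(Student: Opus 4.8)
The plan is to work within $\RCAo+\III$ and argue by contraposition: given an infinite binary tree $T\subseteq 2^{<\N}$ together with a bound $c\in\N$ such that $|T_\ext^{=n}|\le c$ for every $n$, I want to produce a path. First I would observe that since $T$ is an honest (coded) set, extendibility of a node $\sigma$ — i.e.\ $\forall n\,\exists\tau\in T(\lh(\tau)=n\wedge\tau\supseteq\sigma)$ — is a $\Pi^0_1$ predicate, so the set $T_\ext$ of extendible nodes is a $\Pi^0_1$ subset of $T$. For each $n$ the level set $T_\ext^{=n}$ is nonempty (because $T$ is infinite and $2^{<\N}$ is finitely branching, so some node at each level is extendible — this uses only bounded König's lemma available in $\RCAo$ for finite trees, or more directly: the full binary tree argument) and has size at most $c$.

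Next I would use $\III$ to pin down the eventual branching structure. Consider, for each $k\le c$, the $\Sigma^0_2$ statement ``$\exists^\infty n\,|T_\ext^{=n}|=k$''; actually it is cleaner to let $k_0$ be the least $k$ such that $|T_\ext^{=n}|=k$ for infinitely many $n$. Since the values $|T_\ext^{=n}|$ lie in the finite set $\{0,1,\dots,c\}$, the pigeonhole principle for this bounded range (available from $\Bii$, hence from $\III$) gives such a $k_0$, and moreover $|T_\ext^{=n}|\le k_0$ is false only finitely often — more precisely, $\{n:|T_\ext^{=n}|<k_0\}$ is bounded by minimality, so there is $N_0$ with $|T_\ext^{=n}|\ge k_0$ for all $n\ge N_0$ and $|T_\ext^{=n}|=k_0$ infinitely often. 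Then I claim $|T_\ext^{=n}|=k_0$ for \emph{all} $n\ge N_0$: since every extendible node at level $n+1$ lies above an extendible node at level $n$, and every extendible node has an extendible successor, the sequence $|T_\ext^{=n}|$ is nondecreasing for $n\ge N_0$ (it cannot drop below $k_0$, and a node splitting would make it strictly increase permanently), so being $k_0$ infinitely often forces it to be $k_0$ eventually. This stabilization step is where $\III$ is genuinely used (to locate $k_0$ and the threshold $N_0$ via a $\Sigma^0_2$-bounding/least-number argument) and is the main obstacle: one must verify that $\RCAo+\III$ suffices to carry out the "least $k$ occurring infinitely often" reasoning over a finite range, which is exactly $\BII$, plus enough induction to prove the monotonicity/stabilization claim.

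Once the level sets have stabilized at size $k_0$ past $N_0$, no extendible node above level $N_0$ can split (otherwise the level count would strictly increase and never return to $k_0$), so each extendible $\sigma$ with $\lh(\sigma)\ge N_0$ has exactly one extendible immediate successor. Fix any extendible $\sigma_0$ at level $N_0$ (one exists since $T_\ext^{=N_0}\ne\emptyset$). Then the function $n\mapsto$ (the unique extendible node of $\hat T$ above $\sigma_0$ at level $n$) is $\Delta^0_1$ relative to $T$ once we know $N_0,\sigma_0$: at stage $n$ there is a unique immediate extension of the stage-$n$ node that is extendible, and "being the extendible one among the (at most two) candidates" is decidable because exactly one of them is extendible and we can search until we find which — here I would note that since precisely one successor is extendible and the other is not, a simple search terminates, giving a total $\Delta^0_1$ function, hence a set $P\in 2^{\N}$ by $\Delta^0_1$-comprehension; and $P$ is a path of $T$ extending $\sigma_0$. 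This completes the argument. The case $k_0=0$ does not arise because each $T_\ext^{=n}$ is nonempty; and if $N_0=0$ the argument only simplifies.
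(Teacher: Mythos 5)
Your argument is essentially the paper's proof: use $\III$ to stabilize the number of extendible nodes per level at its eventual value, observe that past the stabilization point no extendible node can split, and compute the path by searching, at each extendible node, for a level witnessing that all surviving extensions pass through one of its two immediate successors. The only point needing care is how you extract $k_0$: ``least $k$ attained infinitely often'' is prima facie a $\Pi^0_3$ minimization (not obviously available from $\III$), but the monotonicity of $n\mapsto|T^{=n}_{\ext}|$ that you yourself establish lets you replace it by $b_0=\min\{a\le c\mid \A n\,|T^{=n}_{\ext}|\le a\}$ as in the paper, a $\Pi^0_2$ least-number application that $\III$ directly provides.
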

\begin{proof}
Let $T\subseteq 2^{<\N}$ be an infinite tree and let $b\in \N$ be a bound for $T_{\ext}^{=n}$.
By $\III$ there exists $b_{0}\le b$ such that $b_{0}=\min\{a\le b\mid \A n(|T^{=n}_{\ext}|\le a)\}$.
Take $n\in\N$ such that $|T^{=n}_{\ext}|=b_{0}$, and let $\sigma\in T^{=n}_{\ext}$.
Then, any extendible extension of $\sigma$ has exactly one immediate extendible extension.
(More formally, for any $\sigma'\supseteq \sigma$, there exist $n\in\N$ and $i< 2$ such that for any $\tau\in T^{=n}$, $\tau\supseteq \sigma'\to \tau\supseteq \sigma'{}^{\frown}i$.)
Thus, one can compute a path extending $\sigma$.
\end{proof}

\begin{thm}
The following are equivalent over $\RCAo$.
\begin{enumerate}
 \item $\III$.
 \item $\sWKL(w\text{-}bd)$.
 \item $\KL(w\text{-}bd)$.
\end{enumerate} 
\end{thm}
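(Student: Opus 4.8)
The proof goes around the cycle $1\to3\to2\to1$. The implication $3\to2$ is exactly the reduction $8\to5$ recorded in Section~\ref{sec:preliminary}, so only $1\to3$ and $\neg1\to\neg2$ require work.

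For $1\to3$ I would imitate the proof of Lemma~\ref{lem:ISig2>ext-bd}, paying the extra price that $T\subseteq\N^{<\N}$. Let $T$ be infinite with $|T^{=n}|\le c$ for all $n$, and work in $\RCAo+\III$. First, $\III$ makes the levels usable: each $T^{=n}$ is a $\Delta^0_1$-definable set of cardinality $\le c$, hence bounded, and $\BII$-bounding supplies a uniform bound on each initial block of levels, after which $T_{\ext}^{=n}$ is a coded finite set and ``$\sigma$ is not extendible'' becomes $\Sigma^0_1$. Next, every extendible $\sigma$ has an extendible immediate extension: among the $\le c$ strings $\sigma^{\frown}i\in T$, were each not extendible, say $\sigma^{\frown}i$ has no extension of length $M_i$, then $\BII$-bounding gives a common $M^\ast$ with no extension of $\sigma$ of length $M^\ast$, contradicting extendibility of $\sigma$. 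Hence restriction maps $T_{\ext}^{=n+1}$ onto $T_{\ext}^{=n}$, so $|T_{\ext}^{=n}|$ is non-decreasing and bounded by $c$; let $b_{0}$ be its eventual value, attained for all $n\ge N$. For $n\ge N$ these maps are bijections, so every extendible node of length $\ge N$ has exactly one extendible immediate extension, and starting from any $\sigma\in T_{\ext}^{=N}$ one computes the path through $\sigma$ by, at each step, waiting until all but one of the $\le c$ immediate extensions of the current node are enumerated as non-extendible and taking the survivor; the resulting function exists by $\III$ (justifying this recursion for trees in $\N^{<\N}$ is where $\mathrm{I}\Sigma^0_2$, rather than merely $\BII$, is genuinely spent).

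For $\neg1\to\neg2$ I would exhibit a $\Sigma^0_1$ binary tree of width bounded by a constant and with no path. By Lemma~\ref{lem:neg-ISig2}, fix $X$ and a $\Pi^{0,X}_1$ set $A$ which is unbounded with $|A|\le c$; relativising to $X$, put $B=\N\setminus A$ (a $\Sigma^0_1$ set) and fix an increasing enumeration $B=\bigcup_{s}B_{s}$ by finite sets, and set
\[
T=\{\sigma\in 2^{<\N}\mid \E s\;(\A i<\lh(\sigma)\,(\sigma(i)=0\leftrightarrow i\in B_{s})\ \wedge\ |\{i<\lh(\sigma):\sigma(i)=1\}|\le c)\}.
\]
So a string of length $n$ in $T$ is the characteristic function of $[0,n)\setminus B_{s}$ for some stage $s$ by which this set has already shrunk to cardinality $\le c$. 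Then $T$ is a tree (deleting the last bit preserves the defining condition); $T$ is infinite, because for each $n$ the sets $B_{s}\cap[0,n)$ increase to $B\cap[0,n)$ while $|[0,n)\setminus B|=|A\cap[0,n)|\le c$, so already some $B_{s}\cap[0,n)=B\cap[0,n)$; and $T$ has width $\le c+1$ at every level, since the $[0,n)\setminus B_{s}$ form a $\subseteq$-decreasing chain and those of cardinality $\le c$ form a strictly decreasing chain of finite sets of size $\le c$, hence of length $\le c+1$. Finally $T$ has no path: if $f$ were one, then $|\{i<n:f(i)=1\}|\le c$ for all $n$, so $\{i:f(i)=1\}$ is a finite set, bounded by some $N$, whence $f(i)=0$ and therefore $i\in B$ for all $i\ge N$, i.e.\ $A\subseteq[0,N)$, contradicting that $A$ is unbounded. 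Thus $\sWKL(w\text{-}bd)$ fails, which gives $2\to1$.

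The only genuinely clever point is this width estimate: letting nodes encode the whole characteristic function of $A_{s}\rest n$ would give trees whose width grows with $n$, and the trick is to admit an approximation only once it has already collapsed below $c$, after which it can shrink at most $c$ more times. On the $1\to3$ side the delicate issue is not combinatorial but foundational — for trees in $\N^{<\N}$ one must first extract from $\III$ that the level sets are uniformly coded (so that non-extendibility is $\Sigma^0_1$), without which even a leftmost-path argument is $\Pi^0_2$ and would require $\ACAo$.
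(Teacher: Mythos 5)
Your implication $\neg 1\to\neg 2$ is essentially the paper's own construction: the same $\Sigma^{0}_{1}$ tree of stage-wise approximations to the bounded-cardinality unbounded set from Lemma~\ref{lem:neg-ISig2}, with the same key device of admitting an approximation only after it has collapsed to cardinality $\le c$ so that each level is a short decreasing chain. That half, and $3\to2$, are fine.

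The gap is in $1\to3$, at exactly the point you flag but do not resolve. For $T\subseteq\N^{<\N}$, ``$\sigma$ is not extendible'' is $\E n\,\A\tau\in T^{=n}\,(\tau\not\supseteq\sigma)$, where $\tau$ ranges over an unbounded set of strings; this is $\Sigma^{0}_{2}$, not $\Sigma^{0}_{1}$, so your path-finding step (``wait until all but one immediate extension is enumerated as non-extendible'') is not a $\Delta^{0}_{1}$ recursion. The repair you gesture at --- that $\III$ makes the level sets \emph{uniformly} coded --- is false: $\BII$ codes each $T^{\le N}$ separately, but the function $N\mapsto\mathrm{code}(T^{=N})$ has only a $\Pi^{0}_{1}$ graph and need not exist even under full induction. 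Concretely, for a noncomputable c.e.\ set $W$, the computable tree $T=\{0^{n}\mid n\in\N\}\cup\{0^{n}{}^{\frown}\langle s+1\rangle\mid n\text{ enters }W\text{ at stage }s\}$ has width $\le 2$, yet coding its levels computes $W$; your procedure stalls on it, since one can neither confirm that $0^{n}{}^{\frown}\langle s+1\rangle$ is non-extendible (a $\Pi^{0}_{1}$ fact) nor that no further immediate extensions of $0^{n}$ will appear --- even though the path $0^{\infty}$ is computable. This is precisely why the paper does not port Lemma~\ref{lem:ISig2>ext-bd} directly: it instead uses $\III$ to extract the limiting extendible width $b_{0}$ together with an infinite set $X$ of levels realizing it, transfers $T$ to a tree $S\subseteq b_{0}^{<\N}$ over a \emph{finite} alphabet (indexing compatible selections of nodes at the levels in $X$), and only then applies Lemma~\ref{lem:ISig2>ext-bd}, where bounded branching makes every relevant search bounded and hence $\Sigma^{0}_{1}$. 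Some such reduction to a finitely branching tree is needed; without it, the passage from $2^{<\N}$ to $\N^{<\N}$ --- which is the entire content of statement $3$ beyond the bounded case --- is unjustified.
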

\begin{proof}
We first show 1 $\to$ 3.
Let $T\subseteq \N^{<\N}$ be an infinite tree and let $b\in \N$ be a bound for $T^{=n}$.
By $\III$ there exists $b_{0}\le b$ such that $b_{0}=\max\{a\le b\mid \A n\E m(|T^{=m}_{\ext}|\ge a)\}$.
Then, the $\Sigma^{0}_{1}$ set $X_{0}=\{m\in\N\mid |T^{=m}_{\ext}|\ge b_{0}\}$ is infinite, and hence there exists an infinite set $X\subseteq X_{0}$.
By maximality, $|T^{=m}_{\ext}|= b_{0}$ for all but finite $m\in X$, so we may assume that $|T^{=m}_{\ext}|= b_{0}$ holds for all $m\in X$.
Write $X=\{m_{0}<m_{1}<\dots\}$, and put $\sigma_{i,j}$ be the $j$-th left-most element of $T^{=m_{i}}$.
Note that one can compute the double sequence $\{\{\sigma_{i,j}\mid j<b_{0}\}\mid i\in\N\}$ from $T$ and $X$.
Now, define an infinite tree $S\subseteq b_{0}^{<\N}$ as $\tau\in S\leftrightarrow \A i\le j<\lh(\tau)(\sigma_{i,\tau(i)}\subseteq\sigma_{j,\tau(j)})$.
Then, $S$ has a path by $\WKL(w\text{-}bd)$, which is provable from $\III$ by Lemma~\ref{lem:ISig2>ext-bd}.
One can easily retrieve a path of $T$ from a path of $S$.

3 $\to$ 2 is explained in Section~\ref{sec:preliminary}.

Finally we show $\neg$1 $\to$ $\neg$2.
By Lemma~\ref{lem:neg-ISig2}, there exists a set $X$ and a $\Pi^{0,X}_{1}$-set $A$ such that $A$ is unbounded and $|A|\le c$ for some $c\in\N$.
Note that $A$ cannot exist as a set since the cardinality of an unbounded set won't be bounded within $\RCAo$.
Write $n\in A\leftrightarrow \A m\theta(m,n,X)$ where $\theta$ is a $\Sigma^{0}_{0}$-formula.
Then, define a $\Sigma^{0}_{1}$ tree $T$ as 
\begin{align*}
\sigma\in T\leftrightarrow \E m>\lh(\sigma)( & \A i<\lh(\sigma)(\sigma(i)=1\leftrightarrow \A m'<m\,\theta(m',i,X))\\
& \wedge |\{i<\lh(\sigma)\mid \A m'<m\,\theta(m',i,X)\}|\le c).
\end{align*}
Then, $T$ is infinite since $A\rest n:=\{x\in A\mid x<n\}\in T$ for any $n\in\N$.
($A\rest n$ always exists by bounded $\Sigma^{0}_{1}$-comprehension which is available within $\RCAo$.)
By the definition of $T$, for any $\sigma,\tau\in T$ such that $\lh(\sigma)=\lh(\tau)$, if $\sigma(i)<\tau(i)$ for some $i<\lh(\sigma)$, then $\sigma(i)\le \tau(i)$ for all $i<\lh(\sigma)$.
Thus, there are at most $c$-many elements in $T^{=n}$ for any $n\in\N$.
A path of $T$ should be identical with $A$, so $T$ cannot have a path.
\end{proof}

\section{Computably true fragments of $\WKL$ and induction}
The situation is more complicated when we consider weak fragments of $\WKL$ since they are all provable within $\WKLo$, which is $\Pi^{1}_{1}$-conservative over $\RCAo$.
Thus, they never imply pure induction axioms.
Still, they may require some induction when $\WKL$ fails badly.

First, we see that a bound for prefix-free subsets is enough to find a recursive path even within $\RCAo$.
\begin{prop}
$\WKL(pf\text{-}bd)$ is provable within $\RCAo$.
\end{prop}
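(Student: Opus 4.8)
The plan is to show that $\WKL(pf\text{-}bd)$ is provable in $\RCAo$ directly, by extracting a path from a tree whose prefix-free subsets are bounded, without appealing to any induction beyond what $\RCAo$ supplies. Let $T\subseteq 2^{<\N}$ be an infinite tree and let $c\in\N$ bound the size of every prefix-free $P\subseteq T$. The key observation is that, unlike in the general case of Definition~\ref{defi-WWWKL}(7), here the bound $c$ is a \emph{fixed} natural number, so the quantity ``maximal size of a prefix-free subset of $T$'' is a bounded quantity and hence, by bounded $\Sigma^0_1$-comprehension (available in $\RCAo$), we may form the actual maximizing prefix-free set. Concretely, $b_0=\max\{b'\le c\mid \E P\subseteq T\ (P\text{ prefix-free and }|P|=b')\}$ exists in $\RCAo$ since the defining predicate is $\Sigma^0_1$ and bounded by $c$; fix a witnessing prefix-free $P_0\subseteq T$ with $|P_0|=b_0$.

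Next I would argue, exactly as in the proof of the theorem that $\BII\leftrightarrow\KL(pf\text{-}bd)$ (the direction $1\to 3$), that by maximality of $P_0$ every $\sigma\in T$ is comparable with some member of $P_0$, and that for the subtree $\hat T=\{\sigma\in T\mid \E\tau\in P_0\ \sigma\supseteq\tau\}$, every node has at most one immediate extension inside $\hat T$ (two immediate extensions would let us replace the relevant $\tau\in P_0$ by two incomparable strings, enlarging the prefix-free set and contradicting maximality). Since $\hat T$ is infinite and $P_0$ is finite, at least one $\tau_0\in P_0$ has infinitely many extensions in $\hat T$ --- but here is the point where $\RCAo$ alone does \emph{not} obviously single one out, since that step used $\BII$. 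However, for the $2^{<\N}$ case we do not need to \emph{identify} such a $\tau_0$: because $T\subseteq 2^{<\N}$, at each length at most $b_0$ nodes of $\hat T$ survive, and since each node has at most one immediate extension in $\hat T$, the function $n\mapsto \hat T^{=n}$ is eventually constant in the sense that the surviving nodes just extend linearly. So I would instead define, by primitive recursion, a sequence of strings $\rho_n\in\hat T$ with $\lh(\rho_n)\ge n$ by always following, from the leftmost element of $P_0$ that still has an extension of length $\ge n$, the unique path dictated by $\hat T$; the ``has an extension of length $\ge n$'' predicate is decidable relative to a $\Pi^0_1$ search of a fixed finite set of candidates below length bounds computable from $T$ and $P_0$, and the finiteness of $P_0$ converts the $\Sigma^0_1$ quantifier into a bounded one, keeping everything $\Delta^0_1$.

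The main obstacle I anticipate is precisely this last step: making the choice of which branch of $\hat T$ to follow \emph{computable} (i.e.\ $\Delta^0_1$) in $\RCAo$, rather than merely $\Sigma^0_1$ as the naive ``pick a $\tau_0$ with infinitely many extensions'' would give. The resolution is that with a fixed finite bound $c$ one can afford to search all $\le c$ candidates in parallel: define a path $f\in 2^{\N}$ by letting $f\rest n$ be the leftmost string $\sigma\in T^{=n}$ such that $\sigma\in\hat T$ and $\sigma$ has an extension in $\hat T$ of every length --- equivalently, such that for every $m\ge n$ there is $\tau\in T^{=m}$ with $\tau\supseteq\sigma$. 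This last condition, for a \emph{fixed} $\sigma$, is $\Pi^0_1$; but there are at most $b_0\le c$ strings in $T^{=n}\cap\hat T$, and by a pigeonhole argument using the bound $c$ on prefix-free sets, \emph{at least one} of them satisfies it, and one checks that the leftmost such $\sigma$ at level $n{+}1$ extends the leftmost such at level $n$. Hence $f$ is well-defined and $\Delta^0_1$, so it exists as a set by $\RCAo$, and $f$ is a path of $T$. I would close by remarking that this argument genuinely uses only bounded $\Sigma^0_1$-comprehension and primitive recursion, explaining why $\WKL(pf\text{-}bd)$ --- in contrast to $\KL(pf\text{-}bd)$ --- needs no extra induction.
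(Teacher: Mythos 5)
Your overall strategy is the same as the paper's: use $\II$ to obtain a maximal prefix-free set $P_0\subseteq T$, observe that maximality forces the part of $T$ lying above $P_0$ to be a finite union of non-branching chains, and then follow one of these chains computably. You also correctly isolate the one delicate point, namely that $\RCAo$ does not let you invoke $\BII$ to select a $\tau_0\in P_0$ with infinitely many extensions. However, your resolution of that point has a genuine gap. You define $f\rest n$ to be the leftmost $\sigma\in \hat{T}^{=n}$ having extensions of every length, and assert this is $\Delta^0_1$ because ``the finiteness of $P_0$ converts the $\Sigma^0_1$ quantifier into a bounded one.'' It does not: extendibility of a fixed $\sigma$ is genuinely $\Pi^0_1$, and ``every node to the left of $\sigma$ at level $n$ is non-extendible'' is genuinely $\Sigma^0_1$, so the graph of your $f$ is, as written, only a Boolean combination of $\Sigma^0_1$ conditions, and $\RCAo$ does not provide comprehension for such a set. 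Your coherence check --- that the leftmost extendible node at level $n+1$ extends the one at level $n$ --- is correct but addresses well-definedness, not definability. Likewise, the intermediate suggestion of following ``the leftmost element of $P_0$ that still has an extension of length $n$'' is $\Delta^0_1$ but not coherent: that element can change as $n$ grows, so the resulting strings need not be nested.

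The missing step is a stabilization argument. Since each node of $\hat{T}$ has at most one immediate extension in $T$ (two would enlarge $P_0$), the function $n\mapsto |\hat{T}^{=n}|$ is non-increasing and positive for $n$ beyond the lengths of the elements of $P_0$; its set of values is a nonempty bounded $\Sigma^0_1$-definable set of numbers, so by the $\Sigma^0_1$ least number principle (available in $\RCAo$) it attains a minimum at some level $n_0$ and is constant thereafter. Beyond $n_0$ the restriction maps $\hat{T}^{=n+1}\to\hat{T}^{=n}$ are bijections between coded finite sets, so every surviving node has exactly one immediate extension and is extendible; in particular some $\sigma\in P_0$ is extendible, and by maximality no two incomparable nodes of $T$ extend $\sigma$. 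Only now is ``the unique element of $T^{=n}$ extending $\sigma$'' a $\Delta^0_1$ definition, and the path exists by $\Delta^0_1$-comprehension. This is exactly how the paper's proof is organized (first produce an extendible $\sigma\in P_0$, then follow the single chain above it); with this one repair your argument goes through.
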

\begin{proof}
Let $T\subseteq 2^{<\N}$ be an infinite tree and let $c\in\N$ such that for any prefix-free set $P\subseteq T$, $|P|\le c$.
By $\II$ there exists $c_{0}\le c$ and a prefix-free set $P_{0}\subseteq T$ such that $|P_{0}|=c_{0}$ and there is no prefix-free set $P'\subseteq T$ with $|P'|>c_{0}$.
Then, there exists an extendible node $\sigma\in T$ such that $\sigma\in P_{0}$.
By the maximality of $c_{0}$ and $P_{0}$, there is no incomparable $\tau,\tau'\in T$ extending $\sigma$, hence one may easily computes a path of $T$ extending $\sigma$. 
\end{proof}

Next we focus on $\WKL(w\text{-}bd)$ and $\WKL(ext\text{-}bd)$. Indeed, they are equivalent and strictly in between $\RCAo$ and $\WKLo$.
\begin{thm}
$\WKL(w\text{-}bd)$ and $\WKL(ext\text{-}bd)$ are equivalent over $\RCAo$.
\end{thm}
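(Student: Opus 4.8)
The plan is to prove the two directions separately. The direction $\WKL(\ext\text{-}bd)\to\WKL(w\text{-}bd)$ is the easy one: given an infinite tree $T\subseteq2^{<\N}$ with $|T^{=n}|\le c$ for all $n$, the extendible nodes form a subtree $T'=\{\sigma\in T\mid\sigma\text{ is extendible}\}$ which is still infinite and satisfies $|T'^{=n}_{\ext}|=|T'^{=n}|\le|T^{=n}|\le c$. The only subtlety is that ``$\sigma$ is extendible'' is a $\Pi^0_1$ predicate, so $T'$ need not exist as a set in $\RCAo$; instead one should reformulate $\WKL(\ext\text{-}bd)$ as a statement about arbitrary (not necessarily set-sized) collections of extendible nodes, or—more safely—observe that for the application we only need a uniform bound on the \emph{number} of extendible nodes at each level, which is already furnished by the bound $c$ on all nodes. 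I would simply remark that a bound for $T^{=n}$ is \emph{a fortiori} a bound for $T^{=n}_{\ext}$, so any instance of $\WKL(w\text{-}bd)$ is directly an instance of $\WKL(\ext\text{-}bd)$, giving $\WKL(\ext\text{-}bd)\to\WKL(w\text{-}bd)$ trivially.

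For the converse $\WKL(w\text{-}bd)\to\WKL(\ext\text{-}bd)$, suppose $T\subseteq2^{<\N}$ is infinite with $|T^{=n}_{\ext}|\le c$ for all $n$. The difficulty is that $T$ itself may be very wide (even the full binary tree below non-extendible nodes), so we cannot apply $\WKL(w\text{-}bd)$ to $T$ directly. The idea is to build, inside $\RCAo$, a new tree $\tilde T$ that simulates the extendible part of $T$ and has width bounded by $c$. The natural construction: for each $n$, wait until we find a level $m>n$ such that at least $|T^{=m}|$-many, err, such that the nodes of $T$ of length $n$ that have an extension in $T^{=m}$ — call these the nodes ``alive at stage $m$'' — and then include in $\tilde T^{=n}$ only those $\sigma\in T^{=n}$ that stay alive longest. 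More precisely, I would define $\sigma\in\tilde T$ iff $\sigma\in T$ and for every $\tau\in T$ with $\lh(\tau)=\lh(\sigma)$ that is alive at all levels $\le$ some threshold, $\sigma$ is also alive there — arranging that $\tilde T^{=n}$ consists of the $c$ (or fewer) nodes of $T^{=n}$ that survive longest, broken by left-to-right order. This $\tilde T$ is $\Delta^0_1$ in $T$ (the defining condition only quantifies over a bounded piece of $T$ once a witnessing level is fixed), is infinite (any sufficiently long finite approximation to a genuine extendible branch survives), and has $|\tilde T^{=n}|\le c$; applying $\WKL(w\text{-}bd)$ to $\tilde T$ yields a path, which one checks is a path of $T$ because at each level it stays among the longest-surviving nodes and hence cannot be a dead end.

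The main obstacle is making the construction of $\tilde T$ genuinely work in $\RCAo$ without any induction beyond $\Sigma^0_1$: one must ensure both that $\tilde T$ is a \emph{set} (the survival-time comparison must be decidable from $T$ using only a bounded search, which it is, since to compare how long $\sigma$ and $\tau$ survive it suffices to run $T$ up to the first level where one of them dies) and that $\tilde T$ is \emph{infinite} and \emph{path-complete for extendible branches of $T$} — the latter requires arguing that a true extendible branch of $T$ is never cut off by the ``longest survivor'' selection, which follows because a branch that survives forever beats any branch that dies at a finite level, but needs care about ties, resolved by the fixed left-to-right tie-break. I would organize the write-up as: (i) the trivial direction as a one-line remark; (ii) the formal definition of $\tilde T$ from $T$ and $c$; (iii) a lemma that $\tilde T$ exists as a set and is an infinite tree with $|\tilde T^{=n}|\le c$; (iv) a lemma that every path of $\tilde T$ is a path of $T$; and (v) the conclusion by applying $\WKL(w\text{-}bd)$.
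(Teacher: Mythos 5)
Your overall strategy is correct and is essentially the one the paper uses: the easy direction is exactly the paper's remark that a bound on $|T^{=n}|$ is a fortiori a bound on $|T^{=n}_{\ext}|$, so every instance of $\WKL(w\text{-}bd)$ is an instance of $\WKL(ext\text{-}bd)$; and for the hard direction the paper likewise waits, for each level $n$, for a stage $s_n$ by which at most $c$ of the level-$n$ nodes still have extensions, and keeps only those survivors. (The paper then repackages the survivors as a padded tree in $3^{<\N}$, whereas you take the pruned subtree of $T$ directly; your packaging is if anything cleaner, since the result is again a binary subtree of $T$ and ``every path of $\tilde T$ is a path of $T$'' becomes immediate from $\tilde T\subseteq T$ --- no worry about dead ends is needed.)

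Two points in your sketch are asserted where real arguments are required. First, drop the device of ``comparing how long $\sigma$ and $\tau$ survive by running $T$ until one of them dies'': when both are extendible that search never halts, so pairwise survival comparison is not $\Delta^0_1$ and the ``top-$c$ with tie-break'' set is not obviously computable. The construction must instead be phrased, as you half-do, via $m_n=$ the least level $m$ such that at most $c$ elements of $T^{=n}$ have an extension in $T^{=m}$, with $\tilde T^{=n}$ the set of those survivors; then no tie-breaking is needed. Even so, the existence of $m_n$ is not free in $\RCAo$: this is precisely where the hypothesis $|T^{=n}_{\ext}|\le c$ enters, via a least-number argument --- the survivor sets form a decreasing sequence of subsets of the coded finite set $T^{=n}$, so by $\Sigma^0_1$ induction their cardinality attains a minimum, after which the survivor set stabilizes; a stabilized survivor set consists only of extendible nodes, hence has size at most $c$. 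Second, you list ``$\tilde T$ is an infinite tree'' as a lemma but never argue downward closure, which is the crux of the construction. It holds because the $m_n$ are non-decreasing in $n$: every level-$n$ node surviving to level $m$ has a child at level $n+1$ surviving to level $m$, and distinct nodes have distinct children, so at most $c$ survivors at level $n+1$ forces at most $c$ at level $n$; hence a node of $\tilde T^{=n+1}$ survives to $m_{n+1}\ge m_n$ and so does its parent. This monotonicity is exactly what the paper builds in by requiring $s_n>s_{n-1}$ in its recursion. With these two gaps filled, your plan goes through.
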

\begin{proof}
Let $T\subseteq 2^{<\N}$ be an infinite tree such that $\A n\in\N|T^{=n}_{\ext}|\le c$ for some $c\in\N$.
We will construct a tree $T'\subseteq 3^{<\N}$ with $|T'^{=n}|\le c$ such that any path of $T'$ computes a path of $T$.
Put $T_{s}=\{\sigma\in T\mid \E \tau\in T^{=\lh(\sigma)+s}\, \sigma\subseteq\tau\}$.
We recursively define $\{s_{n}\}_{n\in\N}$ as $s_{n}=\min \{s>s_{n-1}\mid |T_{s}^{=n}|\le c\}$.
Such $s_{n}$ always exists since $|T^{=n}_{\ext}|\le c$.
Now $T'\subseteq 3^{<\N}$ is defined as follows:
\begin{align*}
\tau\in T'\leftrightarrow \E \sigma\in T^{\le s}\A s<\lh(\tau)(\tau(s)=\sigma(n)\mbox{ if $\E n\le s\, s=s_{n}$}\wedge \tau(s)=2\mbox{ if $\A n\le s\, s\neq s_{n}$}). 
\end{align*}
Now, any string $\tau\in T'$ is of the form $\tau=j_{0}^{\frown}2^{i_{0}}{}^{\frown}j_{1}^{\frown}2^{i_{1}}{}^{\frown}\dots^{\frown}j_{n-1}^{\frown}2^{i_{n-1}}$ such that $\langle j_{0},j_{1},\dots,j_{n-1} \rangle\in T_{s}^{=n}$ for some $s\ge s_{n}$.
Thus, $|T'^{=n}|\le c$.
For a given a path of $T'$, one can easily compute a path of $T$ by removing all 2's.
\end{proof}

\begin{thm}[$\RCAo$]\label{thm:small-tree}
%Let $(M,S)\models \RCAo+\neg\III$.
If $\III$ fails, there exists a tree $T\in S$ such that $\A n\in\N |T_{\ext}^{=n}|\le c$ for some $c\in\N$ and $[T]$ does not contain any $T$-recursive elements.
\end{thm}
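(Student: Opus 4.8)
The plan is to relativize the failure of $\III$ and then to carry out, $X$-recursively, a construction of a thin binary tree that diagonalizes against all potential recursive paths, using the instability of the $\Pi^{0}_{1}$-set supplied by $\neg\III$ as the source of the ``room'' needed to keep the tree thin. Concretely, first apply Lemma~\ref{lem:neg-ISig2}: since $\III$ fails, fix a set $X$, a $\Sigma^{0}_{0}$ formula $\theta$ and $c\in\N$ so that $A=\{n:\A s\,\theta(s,n,X)\}$ is unbounded and $|A|\le c$; put $A_{s}=\{n\le s:\A t\le s\,\theta(t,n,X)\}$, a uniformly $X$-recursive, non-increasing sequence of finite sets with $n\in A\iff n\in A_{s}$ for all $s\ge n$. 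Recall, as observed in the proof that $\III$ is equivalent to $\sWKL(w\text{-}bd)$, that $\RCAo$ proves no unbounded set has all its finite subsets of size $\le c$; in particular $A$ cannot be a set, and the same applies to any unbounded set recursive in a parameter available in the model.

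Then I would build $T\subseteq 2^{<\N}$ by stages, in the style of constructing a $\Pi^{0}_{1}$-class with no recursive member but kept thin. At each stage the construction maintains a list of at most $c$ \emph{active strands}; each strand carries a finite binary string (only ever extended) together with a \emph{target}, namely an index into the current approximation $A_{s}$ naming the element of $A$ it is betting on. A strand dies as soon as its target leaves $A_{s}$: when it dies its string is kept in $T$ (so that membership in $T$ stays decidable) but no extension of it is ever again marked extendible, and a slot in the active list reopens. Against the requirement $R_{e}$: ``$\varphi^{X}_{e}$ is not a path of $T$'' we act once $\varphi^{X}_{e}$ has, by some stage, committed to following the string of an active strand: we fork that strand, keep the continuation disagreeing with the next value of $\varphi^{X}_{e}$, and retire the other; such a fork is performed only in coordination with a strand death, so the active count never exceeds $c$. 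Then $T$ is the set of all strings appearing along some strand, padded; membership of a string of length $n$ is decided by running the construction for a fixed recursive function of $n$ many stages, so $T\le_{T}X$ and $T$ is honest. One checks the invariants: $T$ is a tree; $T$ is infinite, since there is always a live strand (each $A_{s}$ is nonempty as $A$ is unbounded); and $|T^{=n}_{\ext}|$ is bounded (by a constant depending only on $c$), since an extendible node lies along a strand whose target is permanently in $A$, and there are at most $c$ such strands.

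The heart of the matter — and the step I expect to be the main obstacle — is verifying that every requirement $R_{e}$ is eventually met while the bound $c$ on active strands is never violated, i.e. that the forks required for diagonalization can always be absorbed into the strand deaths that the shrinking of $A_{s}$ forces. This is exactly where the combinatorial content of $\neg\III$ is used: working below the proper $\Sigma^{0}_{3}$-cut that witnesses the failure of induction, one argues that $A_{s}$ produces enough deaths, soon enough, for the bookkeeping to converge on every requirement with index in the cut — and every $X$-recursive (hence, since $T\le_{T}X$, every $T$-recursive) set is $\varphi^{X}_{e}$ for an index $e$ in that cut. Granting this, a $\{0,1\}$-valued total $\varphi^{X}_{e}$ lying in $[T]$ would violate $R_{e}$, so $[T]$ has no $T$-recursive element, as required. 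A sanity check to keep in mind throughout is that one must \emph{not} prove the stronger assertion $[T]=\emptyset$: that is impossible by the $\Pi^{1}_{1}$-conservativity of $\WKLo$ over $\RCAo$, so the construction must be arranged to leave genuine surviving branches even though none of them is recursive — which is precisely why the argument cannot be reduced to the $\Sigma^{0}_{1}$-tree construction of the previous section.
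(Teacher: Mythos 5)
Your overall strategy --- a direct, stage-by-stage construction of a thin tree that diagonalizes against all $\varphi^{X}_{e}$, using the unbounded-but-small $\Pi^{0,X}_{1}$ set from Lemma~\ref{lem:neg-ISig2} to decide which strands survive --- is genuinely different from the paper's, but it has a real gap, and it sits exactly where you say you expect ``the main obstacle'' to be. Two concrete problems. First, the diagonalization mechanism is incoherent as described: if you ``fork that strand, keep the continuation disagreeing with the next value of $\varphi^{X}_{e}$, and retire the other,'' you have not forked at all --- you have simply extended the strand in one direction, which never threatens the bound $c$ and needs no coordination with strand deaths; whereas if you genuinely keep both continuations alive, nothing in the proposal guarantees that the deaths forced by the shrinking of $A_{s}$ arrive soon enough, or in sufficient number, to absorb the extra strands, and the claim that they do is precisely the assertion you never prove. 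Second, the principle you invoke to close the gap is false: the failure of $\III$ does not supply a ``proper $\Sigma^{0}_{3}$-cut'' containing an index for every $X$-recursive set. Indices of total $\{0,1\}$-valued $X$-recursive functions are not confined to any proper cut of the model (every such function has arbitrarily large indices by padding), so ``it suffices to meet $R_{e}$ for $e$ in the cut'' cannot be the right reduction; Lemma~\ref{lem:neg-ISig2} gives you a bounded unbounded $\Pi^{0,X}_{1}$ set, nothing more. As written, the central verification --- that every requirement is met while $|T^{=n}_{\ext}|$ stays bounded --- is both unproved and propped up by a false premise.

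For comparison, the paper avoids this bookkeeping entirely by passing to the complement: $\neg\III$ yields an $X$-r.e.\ set $A$ with $\N\setminus A$ unbounded and $|\N\setminus A|\le c$, so $A$ is not $X$-recursive; one then splits $A$ into disjoint $X$-r.e.\ sets $B_{0},B_{1}$ whose separating class contains no $X$-recursive member (formalizing the classical splitting construction, \cite[Lemma 8.2]{Simpson-Pi01class}), and takes $T$ to be the canonical tree of that separating class. The thinness is then automatic rather than engineered: any two separating sets agree on $B_{0}\cup B_{1}=A$, hence any two extendible nodes of the same length differ only at positions in $\N\setminus A$, giving $|T^{=n}_{\ext}|\le 2^{c}$ with no interaction between the diagonalization and the counting. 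If you want to salvage your direct construction, the honest version would drop the forks (each strand simply extends so as to disagree with the next unhandled convergent $\varphi^{X}_{e}$) and would still owe a careful argument, uniform in $e$ ranging over the whole model, that every surviving lineage of strands eventually disagrees with every total $\varphi^{X}_{e}$ --- but at that point you are essentially re-deriving the separating-class construction the hard way.
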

\begin{proof}
We argue within $(M,S)\models \RCAo+\neg\III$.
Then, there exists a set $X\in S$ and an $X$-r.e. set $A$ such that $\N\setminus A$ is unbounded and $|\N\setminus A|\le c$ for some $c\in\N$.
Such $A$ cannot be a member of $S$ since there is no unbounded set whose cardinality is bounded within $\RCAo$.
Thus, $A$ is not $X$-recursive in $(M,S)$.
By formalizing the construction of \cite[Lemma 8.2]{Simpson-Pi01class}, there exists a pair of disjoint $X$-r.e.~sets $B_{0}$ and $B_{1}$ which splits $A$ such that their separating class does not contain any $X$-recursive elements.
Take an $X$-recursive tree $T$ such that $[T]$ is the separating class of $B_{0}$ and $B_{1}$.
If $\sigma,\tau\in T_{\ext}^{=n}$, $\sigma(i)=\tau(i)$ for all $i\in A$.
Thus, $|T_{\ext}^{=n}|\le 2^{c}$.
\end{proof}

\begin{cor}
$\RCAo$ does not imply $\WKL(ext\text{-}bd)$.
\end{cor}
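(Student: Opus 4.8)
The plan is to build a model of $\RCAo$ in which $\WKL(ext\text{-}bd)$ fails, by relativising the tree furnished by Theorem~\ref{thm:small-tree} to its own Turing degree. First I would fix a countable model $(M,S)\models\RCAo+\neg\III$; such a model exists since $\RCAo$ does not prove $\III$ (for instance, take $M\models\Ii+\neg\Iii$ and let $S$ be the family of sets $\Delta^0_1$-definable over $M$ with number parameters). Applying Theorem~\ref{thm:small-tree} inside $(M,S)$, I obtain a tree $T\in S$ and a bound $c\in M$ such that $(M,S)\models$``$T$ is an infinite binary tree and $\A n\in\N\,|T_{\ext}^{=n}|\le c$'' while $[T]$ contains no $T$-recursive element. (Here $[T]$ is nonempty, so $T$ is genuinely infinite, because by the construction $[T]$ is a separating class of a pair of disjoint sets, which is never empty.)

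Next I would put $S'=\{Y\in S\mid Y\text{ is computable from }T\}$ and check that $(M,S')\models\RCAo$. Closure under $\oplus$ is immediate since $T\in S'$. If a set is $\Delta^0_1$-definable over $(M,S')$ with parameters from $S'$, the defining equivalence between a $\Sigma^0_1$ and a $\Pi^0_1$ formula --- being arithmetic in fixed set parameters --- also holds in $(M,S)$, so by $\Delta^0_1$-comprehension in $(M,S)$ the set belongs to $S$; as it is moreover computable from $T$, it lies in $S'$. The one instance needing attention is $\Sigma^0_1$-induction with set parameters from $S'$: since $T$ is a set of $(M,S)$, the induction scheme of $\RCAo$ in $(M,S)$ already supplies $\Sigma^0_1(T)$-induction over $M$, and any $\Sigma^0_1$ formula all of whose set parameters are computable from $T$ is equivalent to a $\Sigma^0_1(T)$ formula, so each such induction instance holds in $(M,S')$.

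Finally I would observe that $(M,S')\models\neg\WKL(ext\text{-}bd)$. The assertion ``$T$ is an infinite binary tree and $\A n\in\N\,|T_{\ext}^{=n}|\le c$'' is arithmetic in the fixed parameter $T$, hence still holds in $(M,S')$; but $T$ has no path in $S'$, since a path $Y\in S'$ would be an element of $[T]$ computable from $T$, contradicting the choice of $T$. Thus $(M,S')$ is a model of $\RCAo$ in which $\WKL(ext\text{-}bd)$ fails, which is the corollary.

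The step I expect to require the most care is the verification that $(M,S')\models\RCAo$ --- specifically, that cutting down to the $T$-recursive sets does not destroy $\Sigma^0_1$-induction. This works precisely because $T$ already lives in the ground model $(M,S)$, so $\RCAo$ there hands us induction relativised to $T$, which is exactly what $\Sigma^0_1$-induction with parameters from $S'$ requires. A secondary, minor point is ensuring that the tree of Theorem~\ref{thm:small-tree} is genuinely infinite with nonempty body, which is automatic for a separating class of two disjoint sets.
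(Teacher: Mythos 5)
Your proof is correct and is essentially the derivation the paper intends: apply Theorem~\ref{thm:small-tree} in a model of $\RCAo+\neg\III$ and pass to the $T$-recursive sets (equivalently, start directly from the minimal model over some $M\models\Ii+\neg\Iii$). One small point of hygiene: the infiniteness of $T$ should be justified internally --- the separating tree of two disjoint r.e.\ sets is provably infinite in $\RCAo$ because the stage-$n$ approximation yields a node at every level --- rather than by appeal to nonemptiness of $[T]$, which is not available without $\WKL$.
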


\begin{cor}
The following are equivalent over $\RCAo$.
\begin{enumerate}
 \item $\III$.
 \item recursive-$\WKL(w\text{-}bd)$: an infinite binary tree $T\subseteq 2^{<\N}$ has a $T$-recursive path if there exists $c\in\N$ such that for any $n\in\N$, $|T^{=n}|\le c$.
 \item recursive-$\WKL(ext\text{-}bd)$: an infinite binary tree $T\subseteq 2^{<\N}$ has a $T$-recursive path if there exists $c\in\N$ such that for any $n\in\N$, $|T_{\ext}^{=n}|\le c$.
\end{enumerate}
\end{cor}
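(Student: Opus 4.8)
The plan is to run the cycle $1\to 3\to 2\to 1$, reusing the three theorems already proved in this section together with the observation that the path-constructions occurring in them are \emph{uniform} in the given tree. For $1\to 3$ I would re-read the proof of Lemma~\ref{lem:ISig2>ext-bd}: given an infinite binary tree $T$ with $|T_{\ext}^{=n}|\le c$ for all $n$, the hypothesis $\III$ supplies the least such bound $b_{0}$ together with a level $n$ where it is attained, and then, starting from a fixed $\sigma\in T_{\ext}^{=n}$, every extendible $\sigma'\supseteq\sigma$ has a unique immediate extendible extension, which is located by a terminating $T$-recursive search for some $m$ and $i<2$ with $\tau\supseteq\sigma'\to\tau\supseteq\sigma'{}^{\frown}i$ for all $\tau\in T^{=m}$. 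Iterating produces a $T$-recursive path, which is exactly the conclusion of recursive-$\WKL(ext\text{-}bd)$. The implication $3\to 2$ is immediate, since $|T^{=n}|\le c$ for all $n$ forces $|T_{\ext}^{=n}|\le c$ for all $n$.

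For $2\to 1$ I argue by contraposition. If $\III$ fails, Theorem~\ref{thm:small-tree} provides an infinite binary tree $T$ and $c\in\N$ with $|T_{\ext}^{=n}|\le c$ for all $n$ such that $[T]$ contains no $T$-recursive element. Running the construction from the proof that $\WKL(w\text{-}bd)$ and $\WKL(ext\text{-}bd)$ are equivalent produces a tree $T'\subseteq 3^{<\N}$ with $|T'^{=n}|\le c$ for all $n$ such that deleting the $2$'s turns any path of $T'$ into a path of $T$; moreover the auxiliary sequence $\{s_{n}\}_{n\in\N}$ used there is defined by primitive recursion from $T$-decidable data, so $T'$ is $T$-recursive. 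Recoding the three symbols by pairs of bits converts $T'$ into an infinite binary tree $T''\subseteq 2^{<\N}$ with $|T''^{=n}|\le 2c$ for all $n$, still $T$-recursive, any path of which still computes a path of $T$. A $T''$-recursive path of $T''$ would therefore be $T$-recursive and would compute a $T$-recursive element of $[T]$, contradicting the choice of $T$; hence $T''$ refutes recursive-$\WKL(w\text{-}bd)$, so statement $2$ fails.

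The content here is not any single ingredient but the observation that the earlier proofs are uniform in the tree, which is what lets one replace ``has a path'' by ``has a $T$-recursive path.'' The one place demanding care is the last implication: one must verify that $\{s_{n}\}_{n\in\N}$ really is $T$-recursive (each $s_{n}$ exists because $|T_{\ext}^{=n}|\le c$, and the minimization defining it halts), that $T''$ is still infinite, and that the symbol-recoding keeps a cardinality bound at \emph{every} level (odd levels incurring only a harmless extra factor of $2$). These I expect to be routine bookkeeping.
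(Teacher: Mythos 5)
Your proposal is correct and assembles exactly the ingredients the paper intends for this (unproved) corollary: Lemma~\ref{lem:ISig2>ext-bd} for $1\to 3$ (noting its path is obtained by a terminating $T$-recursive search), the trivial $3\to 2$, and Theorem~\ref{thm:small-tree} combined with the $\WKL(w\text{-}bd)/\WKL(ext\text{-}bd)$ conversion for $2\to 1$. Your explicit check that $\{s_{n}\}$ and hence $T'$ are $T$-recursive, and the recoding of the ternary tree into a binary one with bound $2c$, are exactly the (routine) details the paper leaves implicit.
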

\begin{cor}
Over $\RCAo$, $\WKL(w\text{-}bd)$ plus the assertion ``there exists a set $X$ such that for any set $Y$, $Y\le_{T}X$'' implies $\III$.
\end{cor}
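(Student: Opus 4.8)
The plan is to prove the contrapositive inside a countable model $(M,S)\models\RCAo$ of the displayed universality assertion: assuming $\neg\III$, I would produce an infinite binary set-tree witnessing the failure of $\WKL(w\text{-}bd)$.

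First I would fix $X_{0}\in S$ with $Y\le_{T}X_{0}$ for every $Y\in S$, and run the construction from the proof of Theorem~\ref{thm:small-tree} with the oracle taken to be $X_{0}$ itself. This is legitimate: by $\neg\III$ and Lemma~\ref{lem:neg-ISig2} there is some $X\in S$ and an $X$-r.e.\ set $A$ with $\N\setminus A$ unbounded and $|\N\setminus A|\le c$; since $X\le_{T}X_{0}$ by universality, the same $A$ is $X_{0}$-r.e., so $X_{0}$ is also such an oracle. Formalizing the splitting argument of \cite[Lemma~8.2]{Simpson-Pi01class} relative to $X_{0}$ then yields an $X_{0}$-recursive — hence genuinely set-sized, $T\in S$ — infinite binary tree $T$ with $|T^{=n}_{\ext}|\le 2^{c}$ for all $n$, whose separating class $[T]$ contains no $X_{0}$-recursive element. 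The key point is that the relativized argument gives ``no $X_{0}$-recursive path'', which is stronger than the ``no $T$-recursive path'' recorded in the statement of Theorem~\ref{thm:small-tree}; one must read this off the construction.

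Next I would invoke the equivalence of $\WKL(w\text{-}bd)$ and $\WKL(ext\text{-}bd)$ over $\RCAo$ proved above: if $(M,S)\models\WKL(w\text{-}bd)$ then $(M,S)\models\WKL(ext\text{-}bd)$, and applying this instance of $\WKL(ext\text{-}bd)$ to the infinite binary set-tree $T$ supplies a path $P\in S$ through $T$. But then $P\le_{T}X_{0}$ by universality, so $P$ is $X_{0}$-recursive while $P\in[T]$ — contradicting the previous paragraph. Hence $(M,S)\not\models\WKL(w\text{-}bd)$, which is the desired contrapositive, and the corollary follows.

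The main obstacle is the bookkeeping in the second paragraph: one has to check that pushing the hidden oracle of Theorem~\ref{thm:small-tree} up to the universal set $X_{0}$ still delivers both (i) a uniform finite bound on the extendible levels of $T$ and (ii) a separating class avoiding \emph{every} $X_{0}$-recursive real (not merely the $T$-recursive ones). Both are exactly what the oracle-relativized version of \cite[Lemma~8.2]{Simpson-Pi01class} provides once the oracle is displayed as $X_{0}$, so no genuinely new construction is needed — only a careful restatement of Theorem~\ref{thm:small-tree} with its parameter made explicit.
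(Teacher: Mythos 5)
Your proposal is correct and is essentially the argument the paper intends: relativize the construction of Theorem~\ref{thm:small-tree} to the universal set $X_{0}$ (noting, as you do, that the proof actually yields a separating class with no $X_{0}$-recursive member, not merely no $T$-recursive one), then apply the equivalence of $\WKL(w\text{-}bd)$ and $\WKL(ext\text{-}bd)$ to get a path in $S$, which is $X_{0}$-recursive by universality — a contradiction. The appeal to countability of the model is unnecessary but harmless.
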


%\begin{thm}
%$\WWKLo$ does not imply $\WKL(ext\text{-}bd)$.
%\end{thm}
%\begin{proof}
%The following proof is very similar to the proof of \cite[Theorem 3.8]{BK2009}.
%Let $M$ be a countable model of $\II+\neg\III$, and put $S=\Delta^{0}_{1}(M)$.
%Then, $(M,S)\models \RCAo+\neg\III$.
%By Theorem~\ref{thm:small-tree}, there exists a tree $T\in S$ and $c\in M$ such that $\A n\in M\, |T_{\ext}^{=n}|\le c$ and $T$ has no path in $S$.
%%For each $e\in M$, one can uniformly find a tree $T_{e}$ such that $X\in [T_{e}]\leftrightarrow \Phi_{e}^{X}\in T$.
%%Since $\A n\in M\, |T_{\ext}^{=n}|\le c$, $\mu([T_{e}])=0$ for any $e\in M$.
%%Thus, by the standard argument of Martin-L\"of randomness, there exists a tree $U\in S$ such that $[U]$ avoids all the paths of $T_{e}$'s and any member of $[U]$ is Martin-L\"of random.
%
%Let $\mc{C}\subseteq 2^{M}$ be a $\Pi^{0}_{1}$-class of positive measure such that each member of $\mc{C}$ is Martin-L\"of random.
%By the usual forcing argument, take a generic $G\subseteq M$ so that $(M,S\cup\{G\})\models \II+G\in \mc{C}+\A e\, \Phi_{e}^{G}\notin [T]$.
%This is possible since $[T]$ is measure zero.
%Write $G=\bigoplus_{n\in M}G_{n}$ and put $\bar{S}=\{X\subseteq M\mid X\le_{T}G_{0}\oplus\dots \oplus G_{n}$ in $(M,S\cup\{G\})$ for some $ n\in\omega\}$.
%Then, $(M,\bar{S})\models \WWKLo$.
%Since $\Phi_{e}^{G}\notin [T]$, there is no path of $T$ in $\bar{S}$.
%Thus, $(M,\bar{S})\models \neg\WKL(ext\text{-}bd)$
%\end{proof}

\begin{defi}[Very smallness, Binns/Kjos-Hanssen\cite{BK2009}]
$\VS$ asserts the following: an infinite binary tree $T\subseteq 2^{<\N}$ has a path if for any function $f:\N\to\N$, there exists $n\in\N$ such that for any $m\ge n$, $|T^{=f(m)}_{\ext}|<m$.
\end{defi}

%\begin{thm}[Binns/Kjos-Hanssen\cite{BK2009}]
%$\RCAo+\VS$ does not imply $\DNR$.
%\end{thm}

\begin{prop}
Over $\RCAo$, the disjunction of $\III$ and $\VS$ implies $\WKL(w\text{-}bd)$.
%Thus, $\RCAo+\neg\III+\WKL(w\text{-}bd)$ does not imply $\DNR$.
\end{prop}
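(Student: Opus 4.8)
The plan is to work within $\RCAo$, assume we are given an infinite binary tree $T\subseteq 2^{<\N}$ together with a bound $c\in\N$ such that $|T^{=n}|\le c$ for all $n$, and split into cases depending on which disjunct holds. If $\III$ holds, then $\WKL(w\text{-}bd)$ follows directly from the theorem that $\III$ is equivalent to $\sWKL(w\text{-}bd)$ (and hence certainly implies the statement for genuine, not merely $\Sigma^0_1$, trees). So the real content is the case where $\VS$ holds; here the task is to verify that the smallness hypothesis of $\VS$ is satisfied by any $T$ obeying a uniform finite bound $|T^{=n}|\le c$.

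First I would observe that under the hypothesis $|T^{=n}|\le c$ we trivially have $|T^{=n}_{\ext}|\le c$ for all $n$, since $T^{=n}_{\ext}\subseteq T^{=n}$. Now fix an arbitrary $f:\N\to\N$; I need to produce $n\in\N$ such that for all $m\ge n$ we have $|T^{=f(m)}_{\ext}|<m$. Take $n=c+1$. Then for every $m\ge n=c+1$ we have $|T^{=f(m)}_{\ext}|\le c<c+1\le m$, which is exactly what $\VS$ requires. Hence $T$ satisfies the premise of $\VS$, so $\VS$ furnishes a path of $T$, completing this case.

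The only delicate point is bookkeeping: $\VS$ as stated asks for a single $n$ working for all $m\ge n$, and the witness $n=c+1$ is uniform and available in $\RCAo$ with no induction beyond what is needed to form the constant $c+1$; no $\Sigma^0_2$ reasoning is invoked in this branch. Thus the proof is a straightforward case split, and I expect the main (very mild) obstacle to be simply matching the quantifier shape of the $\VS$ hypothesis to the constant bound, which the choice $n=c+1$ handles. Combining the two cases, the disjunction of $\III$ and $\VS$ proves $\WKL(w\text{-}bd)$ over $\RCAo$.
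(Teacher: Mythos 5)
Your proof is correct and is essentially the paper's argument: the paper also splits on the disjunction, handling the $\III$ case by its lemma that $\III$ proves $\WKL(ext\text{-}bd)$ (hence $\WKL(w\text{-}bd)$), and the $\VS$ case by the trivial observation that a uniform bound $|T^{=n}|\le c$ makes the very-smallness hypothesis hold (your witness $n=c+1$ is exactly the point). The only cosmetic difference is that you route the first case through the equivalence $\III\leftrightarrow\sWKL(w\text{-}bd)$ rather than through Lemma~\ref{lem:ISig2>ext-bd}; both are fine.
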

\begin{proof}
By Lemma~\ref{lem:ISig2>ext-bd} and the definition of $\VS$.
\end{proof}

\begin{question}
Is $\WKL(w\text{-}bd)$ strictly weaker than $\III\vee\VS$ over $\RCAo$?
\end{question}

\bibliographystyle{plain}
\bibliography{bib}

\end{document}